\newcommand*\samethanks[1][\value{footnote}]{\footnotemark[#1]}
\newcommand{\HS}{\operatorname{HS}}
\newcommand{\E}{\mathop{\mathbb{E}}}
\newcommand{\F}{\mathbb{F}}
\newcommand{\reals}{\mathbb{R}}
\newcommand{\complex}{\mathbb{C}}
\newcommand{\stackalign}[1]{
  \vcenter{
    \Let@ \restore@math@cr \default@tag
    \baselineskip\fontdimen10 \scriptfont\tw@
    \advance\baselineskip\fontdimen12 \scriptfont\tw@
    \lineskip\thr@@\fontdimen8 \scriptfont\thr@@
    \lineskiplimit\lineskip
    \ialign{\hfil$\m@th\scriptstyle##$&$\m@th\scriptstyle{}##$\crcr
      #1\crcr
    }
  }
}
\newtheorem{lemma}{Lemma}
\newtheorem{cor}[lemma]{Corollary}
\newtheorem{clm}[lemma]{Claim}
\newtheorem{defn}[lemma]{Definition}
\newtheorem{thm}[lemma]{Theorem}
\newtheorem{prop}[lemma]{Proposition}
\title{Mixing of 3-term progressions in Quasirandom Groups}
\author{
    Amey Bhangale
    \thanks{University of California, Riverside, CA, USA. Email : \texttt{ameyb@ucr.edu, sourya.roy@email.ucr.edu}}
    \and 
    Prahladh Harsha
    \thanks{Tata Institute of Fundamental Research, Mumbai, India. Email : \texttt{prahladh@tifr.res.in}}
    \and
    Sourya Roy
    \samethanks[1]
    }
\begin{document}
\date{}
\maketitle

\begin{abstract}

In this note, we show the mixing of three-term progressions $(x, xg, xg^2)$ in every finite quasirandom groups, fully answering a question of Gowers. More precisely, we show that for any $D$-quasirandom group $G$ and any three sets $A_1, A_2, A_3 \subset G$, we have
\[ \left|\Pr_{x,y\sim G}\left[ x \in A_1, xy \in A_2, xy^2 \in A_3\right] - \prod_{i=1}^3 \Pr_{x\sim G}\left[x \in A_i\right] \right| \leq \left(\frac{2}{\sqrt{D}}\right)^{\nicefrac14}.\] 
Prior to this, Tao answered this question when the underlying quasirandom group is $\mathrm{SL}_{d}(\F_q)$. Subsequently, Peluse extended the result to all nonabelian finite {\em simple} groups. In this work, we show that a slight modification of Peluse's argument is sufficient to fully resolve Gower's quasirandom conjecture for 3-term progressions. Surprisingly, unlike the proofs of Tao and Peluse, our proof is elementary and only uses basic facts from nonabelian Fourier analysis.    
\end{abstract}
\noindent

\section{Introduction}
In this note, we revisit a conjecture by Gowers~\cite{Gowers2008} about mixing of three term arithmetic progressions in quasirandom finite groups. Gowers initiated the study of quasirandom groups while refuting a conjecture of Babai and S{\'{o}}s~\cite{BabaiS1985} regarding the size of the largest product-free set in a given finite group. A finite group is said to be $D$-quasirandom for a positive integer $D>1$ if all its non-trivial irreducible representations are at least $D$-dimensional. The quasirandomness property of groups can be used to show that certain "objects" related to the group "mix" well. For instance, the quasirandomness of the group $\operatorname{PSL}_2(\F_q)$ can be used to give an alternate (and weaker) proof~\cite{DavidoffSV-ramanujan} that the Ramanujan graphs of Lubotzky, Philips and Sarnak~\cite{LubotzkyPS1988} are expanders. Bourgain and Gambard~\cite{BourgainG2008} used quasirandomness to prove that certain other Cayley graphs were expanders.

Gowers proved that for any $D$-quasirandom group $G$ and any three subsets $A,B,C \subset G$ satisfying  $|A|\cdot |B|\cdot |C| \geq |G|^3/D$, there exist $a \in A, b\in B, c\in C$ such that $ab=c$. More generally, he proved that the number of such triples $(a,b,c) \in A \times B \times C$ such that $ab=c$ is at least $(1-\eta)|A|\cdot |B|\cdot |C| /|G|$ provided $|A|\cdot |B|\cdot |C| \geq |G|^3/\eta^2D$. In other words the set of triples of the form $(a,b,ab)$ mix well in a quasirandom group. Gowers' proof of this result was the inspiration and the first step towards the recent optimal inapproximability result for satisfiable $k$LIN over nonabelian groups~\cite{BhangaleK2021}. After proving the well-mixing of triples of the form $(a,b,ab)$ in quasirandom groups, Gowers conjectured a similar statement for triples of the form $(x,xg,xg^2)$. More precisely, he conjectured the following statement: Let $G$ be a ${D}$-quasirandom group and $f_1,f_2,f_3: G\rightarrow \mathbb{C}$ such that $\| f_i\|_{\infty} \leq 1$, then \begin{equation}\label{eq:conjecture}\Big |\E_{x,y\sim G} \big[f_1(x)f_2(xy)f_3(xy^2) \big] - \prod_{i=1,2,3} \E_{x\sim G}\big[f_i(x) \big] \Big | = o_D(1),\end{equation}
where the expression $o_D(1)$ goes to zero as $D$ increases. The conjecture can be naturally extended to $k$-term arithmetic progressions and product of $k$ functions for $k>3$. However, in this note we will focus on the three term case. 

For the specific case of 3-term progressions, Tao~\cite{Tao2013} proved the conjecture for the group $\operatorname{SL}_d(\F_q)$ for bounded $d$ using algebraic geometric machinery. In particular, he proved that the right-hand side expression in \cref{eq:conjecture} can be bounded by $O(1/q^{\nicefrac18})$ when $d=2$ and $O_d(1/q^{\nicefrac14})$ for larger $d$. Tao's approach relied on algebraic geometry and was not amenable to other quasirandom groups. Later, Peluse~\cite{Peluse2018} proved the conjecture for all nonabelian finite simple groups. She used basic facts from nonabelian Fourier analysis
to prove that the right-hand side expression in \cref{eq:conjecture} can be bounded by $\sum_{1 \neq \rho \in \hat{G}} 1/d_\rho$ where $\hat{G}$ represents the set of irreducible unitary representation of $G$ and $d_\rho$ the dimension of the irreducible representation $\rho$. This latter quantity is the \emph{Witten zeta function} $\zeta_G$ of the group $G$ minus one and can be bounded for \emph{simple} finite quasirandom groups using a result due to Liebeck and Shalev~\cite{LiebeckS2004}. 

In this paper, we show that a slight variation of Peluse's argument can be used to prove the conjecture for \emph{all quasirandom groups} with \emph{better} error parameters. More surprisingly, the proof stays completely elementary and short. Specifically, we prove the following statement: 
\begin{thm}\label{thm:main}
Let $G$ be a ${D}$-quasirandom finite group, i.e, its all  non-trivial irreducible representations are at least $D$-dimensional. Let $f_1,f_2,f_3: G\rightarrow \mathbb{C}$ such that $\| f_i\|_{\infty} \leq 1$ then \[\Big|\E_{x,y\sim G} \big[f_1(x)f_2(xy)f_3(xy^2) \big] - \prod_{i=1,2,3} \E_{x\sim G}\big[f_i(x) \big] \Big | \leq  \left(\frac{2}{\sqrt{D}}\right)^{\frac{1}{4}}.\]
\end{thm}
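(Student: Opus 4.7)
The plan is to adapt Peluse's non-abelian Fourier strategy. The first move is to reparametrize the sum by $(a,b) := (xy, xy^2)$, a bijection of $G \times G$ onto itself (with inverse $x=ab^{-1}a$, $y=a^{-1}b$), yielding
\[ T = \E_{a,b \in G}\, f_1(ab^{-1}a)\, f_2(a)\, f_3(b). \]
Expanding $f_1$ via the Peter--Weyl series $f_1(g) = \sum_{\rho \in \hat G} d_\rho \operatorname{Tr}(\rho(g) \hat f_1(\rho))$ at $g = ab^{-1}a$ (so $\rho(g) = \rho(a)\rho(b^{-1})\rho(a)$), then averaging over $b$ against $f_3$ via the identity $\E_b f_3(b) \rho(b^{-1}) = \hat f_3(\rho)$, produces the clean formula
\[ T = \sum_{\rho \in \hat G} d_\rho\, \E_a f_2(a) \operatorname{Tr}\bigl(\rho(a) \hat f_3(\rho) \rho(a) \hat f_1(\rho)\bigr). \]
The $\rho = 1$ term gives exactly $\prod_i \E f_i$, so the deviation $T - \prod_i \E f_i$ is the sum over nontrivial irreducibles $\rho$; no separate multilinearity reduction to a mean-zero function is required, since the main term emerges from the trivial representation automatically.

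The final step is to bound this residual sum via Cauchy--Schwarz and the quasirandomness hypothesis. Applying Cauchy--Schwarz in $a$ (using $\|f_2\|_2 \le 1$) gives
\[ \bigl|T - \prod_i \E f_i\bigr|^2 \le \E_a \Bigl|\sum_{\rho \ne 1} d_\rho \operatorname{Tr}\bigl(\rho(a)\hat f_3(\rho)\rho(a)\hat f_1(\rho)\bigr)\Bigr|^2, \]
and expanding the square produces a double sum over pairs $(\rho, \rho')$ of nontrivial irreducibles with an inner $\E_a$ of a four-point correlator in matrix elements of $\rho$ and $\rho'$; Schur-type orthogonality should collapse this to a computable expression in terms of $\hat f_1(\rho)$ and $\hat f_3(\rho)$. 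The quasirandomness hypothesis enters through the operator-norm bound $\|\hat f_i(\rho)\|_{\operatorname{op}} \le \|\hat f_i(\rho)\|_{\HS} \le 1/\sqrt{d_\rho} \le 1/\sqrt D$ for every nontrivial $\rho$, which follows from $\|f_i\|_2 \le 1$ and the Plancherel inequality $d_\rho \|\hat f_i(\rho)\|_{\HS}^2 \le 1$. Combining this pointwise bound with the total-mass estimate $\sum_\rho d_\rho \|\hat f_i(\rho)\|_{\HS}^2 \le 1$ via a further Cauchy--Schwarz should yield $|T - \prod_i \E f_i|^4 \le 2/\sqrt D$, and the theorem follows by taking fourth roots.

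I expect the main obstacle to lie in this last step. Peluse's original argument, applied on the above identity, leads to the Witten-zeta quantity $\sum_{\rho \ne 1} 1/d_\rho$, which is $o_D(1)$ for finite simple nonabelian groups by Liebeck--Shalev but need not be controlled for a general quasirandom group. The tweak proposed by the authors must therefore replace this summation by a single supremum $\max_{\rho \ne 1} 1/\sqrt{d_\rho} \le 1/\sqrt D$, arranging the Cauchy--Schwarz inequalities so as to extract a single quasirandomness factor $1/\sqrt D$ against an $O(1)$ Plancherel-type mass estimate, rather than summing $1/d_\rho$ over irreducibles. Getting the constant $2$ and the exponent $1/4$ in the final bound exactly right will also require careful bookkeeping of the Schur-orthogonality contributions in the four-point correlator.
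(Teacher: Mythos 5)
Your reparametrization $(a,b)=(xy,xy^2)$, $x=ab^{-1}a$, $y=a^{-1}b$ is correct, and the observation that the $\rho=1$ term of the Peter--Weyl expansion of $f_1(ab^{-1}a)$ automatically produces the main term $\prod_i\E f_i$ is a nice shortcut that the paper instead handles by a ``WLOG $\E f_3=0$'' reduction. But after the first Cauchy--Schwarz in $a$ you reach
\[
\E_a\Big|\sum_{\rho\neq 1}d_\rho\operatorname{Tr}\bigl(\rho(a)\,\hat f_3(\rho)\,\rho(a)\,\hat f_1(\rho)\bigr)\Big|^2
\;=\;\sum_{\rho,\rho'\neq 1}d_\rho d_{\rho'}\,
\E_a\bigl[\operatorname{Tr}(\rho(a)X_\rho\rho(a)Y_\rho)\overline{\operatorname{Tr}(\rho'(a)X_{\rho'}\rho'(a)Y_{\rho'})}\bigr],
\]
and the inner expectation involves the quantity $\E_a\bigl[\rho(a)_{ij}\rho(a)_{kl}\,\overline{\rho'(a)_{i'j'}\rho'(a)_{k'l'}}\bigr]$. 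This is \emph{not} a simple Schur-orthogonality computation: it is governed by how the tensor squares $\rho\otimes\rho$ and $\rho'\otimes\rho'$ decompose into irreducibles (Clebsch--Gordan data), which is group-specific and does not collapse to ``a computable expression in terms of $\hat f_1(\rho)$ and $\hat f_3(\rho)$'' by any one-line argument. Your plan ``combine the pointwise bound $\|\hat f_i(\rho)\|_{\HS}\le d_\rho^{-1/2}$ with the Plancherel mass via a further Cauchy--Schwarz'' is stated as a hope, not a derivation, and without a concrete handle on the four-point correlator you have no way to extract the $1/\sqrt D$ factor. The exponent bookkeeping is also off: a single Cauchy--Schwarz in $a$ squares once, so a bound of $1/\sqrt D$ on $\E_a|\cdot|^2$ would give the exponent $1/2$, not $1/4$; the paper's exponent $1/4$ comes from \emph{two} Cauchy--Schwarz steps, over different variables.

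This is in fact a genuinely different route from the paper's, and the paper's route is what makes the argument go through. The paper (following Tao and Peluse) applies Gowers-style Cauchy--Schwarz twice to get
\[
\Theta^4\;\le\;\Big|\E_{x,b,g}\bigl[\Delta_b f_3(x)\cdot\bigl(\Delta_{g^{-1}bg}f_3 * \mu_{g^{-1}C(g^{-1})}\bigr)(x)\bigr]\Big|,
\]
where $\mu_{gC(g)}$ is the normalized indicator of a translated conjugacy class. Crucially (\cref{fact:fc_mu}) this measure has the explicit Fourier transform $\hat\mu_{gC(g)}(\rho)=\frac{\chi_\rho(g)}{d_\rho}\rho(g)$, and then the orthogonality relation $\E_g|\chi_\rho(g)|^2=1$ eliminates the $d_\rho$ entirely (\cref{lemma:main}), leaving a clean $1/\sqrt D$ from quasirandomness plus a second $1/\sqrt D$ from the BNP corollary (\cref{fact:der}) to handle the mean of $\Delta_{g^{-1}bg}f_3$. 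The replacement of the Witten-zeta sum $\sum_{\rho\neq 1}1/d_\rho$ is achieved not by sharpening a Cauchy--Schwarz in your identity, but by the character-orthogonality identity $\E_g|\chi_\rho(g)|^2=1$ applied to the conjugacy-class measure; without that second averaging variable $g$ and that measure, there is nothing for the identity to act on. So as written, your proposal has a real gap at the decisive step.
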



\section{Preliminaries}

We begin by recalling some basic representation theory and nonabelian Fourier analysis. See the monograph by Diaconis~\cite[Chapter 2]{Diaconis1998} for a more detailed treatment (with proofs). 

We will be working with a finite group $G$ and complex-valued functions $f\colon G \to \complex$ on $G$. All expectations will be with respect to the uniform distribution on $G$.  The \emph{convolution} between two function $f,h\colon G\to \complex$, denoted by $f*h$, is defined as follows:
\[ (f*h)(x) :=\E_{y} [f(xy^{-1})h(y)] .\] 
For any $p \geq 1$, the $p$-norm of any function $f\colon G \to \complex$ is defined as 
\[\|f\|^p_p := \E_{x}[|f(x)|^p] .\]

For any element $g \in G$, the \emph{conjugacy class of $g$}, denoted by $C(g)$, refers to the set $\{ x^{-1}gx | x \in G\}$. Observe that the conjugacy classes form a partition of the group $G$. A function $f\colon G \to \complex$ is said to be a \emph{class function} if it is constant on conjugacy classes.  

For any $b\in G$  we use $\Delta_b f(x) :=f(x)\cdot f(xb) $. For any set $S\subset G$,  $\mu_S \colon G \to \reals$ denotes the scaled density function $\frac{|G|}{|S|}\mathbbm{1}_S$. The scaling ensures that $\E_x[\mu_S(x)] =1$. 

Given a complex vector space $V$, we denote the vector space of linear operators on $V$ by $\operatorname{End}(V)$. This space is endowed with the following inner product and norm (usually referred to as the \emph{Hilbert-Schmidt} norm): \[ \text{For } A, B \in \operatorname{End}(V), \quad \langle A,B\rangle_{\HS} := \operatorname{Trace}(A^*B) \quad \text{ and } \quad \|A \|_{\HS}^2 := \langle A, A \rangle_{\HS} = \operatorname{Trace}(A^*A) . \]
This norm is known to be submultiplicative (i.e, $\|AB \|_{\HS}\leq \|A\|_{\HS}\cdot \|B\|_{\HS}$).

\paragraph{Representations and Characters:} A \emph{representation} $\rho \colon G \to \operatorname{End}(V)$ is a homomorphism from $G$ to the set of linear operators on $V$ for some finite-dimensional vector space $V$ over $\complex$, i.e., for all $x, y \in G$, we have $\rho(xy) = \rho(x) \rho(y)$. The dimension of the representation $\rho$, denoted by $d_\rho$, is the dimension of the underlying $\complex$-vector space $V$. The \emph{character} of a representation $\rho$, denoted by $\chi_\rho \colon G \to \complex$, is defined as $\chi_\rho(x) := \operatorname{Trace}(\rho(x))$.

The representation $1 \colon G \to \complex$ satisfying $1(x) = 1$ for all $x \in G$ is the \emph{trivial} representation. A representation $\rho \colon G \to \operatorname{End}(V)$ is said to \emph{reducible} if there exists a non-trivial subpsace $W \subset V$ such that for all $x \in G$, we have $\rho(x) W \subset W$. A representation is said to be \emph{irreducible} otherwise. The set of all irreducible representations of $G$ (upto equivalences) is denoted by $\hat{G}$. 

For every representation $\rho \colon G \to \operatorname{End}(V)$, there exists an inner product $\langle , \rangle_V$ over $V$ such that every $\rho(x)$ is unitary (i.e, $\langle \rho(x) u, \rho(x) v\rangle_V = \langle u, v \rangle_V$ for all $u,v \in V$ and $x \in G$). Hence, we might wlog. assume that all the representations we are considering are unitary.

The following are some well-known facts about representations and characters.
\begin{prop}\label{prop:rep}
\begin{enumerate}
\item The group $G$ is abelian iff $d_\rho =1$ for every irreducible representation $\rho$ in $\hat{G}$.
\item For any finite group $G$, $\sum_{\rho \in \hat{G}} d^2_\rho = |G|$.
\item {[orthogonality of characters]} For any $\rho,\rho'\in \hat{G}$ we have: $\E_{x} \Big[ \chi_{\rho}(x) \overline{\chi_{\rho'}(x)} \Big]= \mathbb{1}[\rho=\rho']$. 
\end{enumerate}
\end{prop}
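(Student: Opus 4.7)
The plan is to derive all three statements from two workhorses of finite-group representation theory: Schur's lemma (any $G$-equivariant map between irreducible complex representations is either zero or a scalar multiple of the identity) together with the decomposition of the left regular representation $R\colon G \to \operatorname{End}(\complex[G])$ given by $R(g)e_h := e_{gh}$. I would prove the three parts in the order (3), (2), (1), since (2) is most cleanly a consequence of (3), and the presentation of (1) is smoothest once the regular representation is in hand.

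For (3), I would apply Schur's lemma to an averaged intertwiner. Given unitary irreducibles $\rho, \rho' \in \hat{G}$ (which we may assume by the excerpt) and any linear $A \colon V_{\rho'} \to V_\rho$, define
\[
T_A := \E_x\bigl[\rho(x)\,A\,\rho'(x^{-1})\bigr].
\]
A one-line change of variables $x \mapsto gx$ shows $\rho(g)\,T_A = T_A\,\rho'(g)$ for all $g \in G$, so Schur forces $T_A = 0$ whenever $\rho \not\cong \rho'$, and $T_A = (\operatorname{Trace}(A)/d_\rho)\,I$ when $\rho = \rho'$ (the scalar is pinned down by taking a trace of both sides and using cyclicity). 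Specializing $A$ to matrix units and reading off entries gives orthogonality of the matrix coefficients; summing along the diagonal then yields $\E_x[\chi_\rho(x)\,\chi_{\rho'}(x^{-1})] = \mathbb{1}[\rho = \rho']$, which is the claimed identity because $\chi_{\rho'}(x^{-1}) = \overline{\chi_{\rho'}(x)}$ for unitary~$\rho'$.

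For (2), I would compute dimensions inside the isotypic decomposition of the regular representation. Complete reducibility, which follows from the unitarity statement in the excerpt via orthogonal-complement splitting, gives $R \cong \bigoplus_{\rho \in \hat{G}} m_\rho \cdot \rho$ for some nonnegative multiplicities $m_\rho$. Applying part (3) yields $m_\rho = \E_x[\chi_R(x)\,\overline{\chi_\rho(x)}]$, and since $\chi_R(x) = |G|\,\mathbb{1}[x = e]$ this evaluates to $m_\rho = \chi_\rho(e) = d_\rho$. Taking dimensions on both sides of $R \cong \bigoplus_\rho d_\rho \cdot \rho$ then gives $|G| = \sum_\rho d_\rho^2$.

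For (1), the forward direction is Schur applied to any irreducible $\rho$ of an abelian $G$: every $\rho(g)$ commutes with every $\rho(h)$, so each $\rho(g)$ is a scalar, and irreducibility then forces $d_\rho = 1$. For the converse, if every irreducible is one-dimensional then $R$ decomposes as a direct sum of one-dimensional representations, so each $R(g)$ is diagonal in a suitable basis; any two diagonal matrices commute, hence $R(gh) = R(g)R(h) = R(h)R(g) = R(hg)$, and the faithfulness of $R$ (from $R(g)e_e = e_g$) yields $gh = hg$. The only substantive obstacle throughout is bookkeeping: ensuring that (2) and (3) are not invoked circularly, which the order (3) then (2) then (1) cleanly avoids.
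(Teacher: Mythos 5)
Your proposal is correct. The paper does not prove this proposition at all --- it states these as well-known facts and defers to Diaconis's monograph for proofs --- so there is nothing to compare against; your argument (Schur's lemma applied to the averaged intertwiner $T_A$ for orthogonality, the isotypic decomposition of the regular representation for the dimension formula, and the scalar/diagonalizability argument for the abelian characterization) is exactly the standard textbook development and all the steps check out.
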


\begin{defn}[quasirandom groups] A nonabelian group $G$ is said to be \emph{$D$-quasirandom} for some positive integer $D>1$ if all its non-trivial irreducible representations $\rho$ satisfy $d_\rho \geq D$.
\end{defn}

\paragraph{Nonabelian Fourier analysis:} Given a function $f \colon G \to \complex$ and an irreducible representation $\rho \in \hat{G}$, the Fourier transform is defined as follows:
\[ \hat{f}(\rho) := \E_{x} [f(x) \rho(x)] . \]
The following proposition summarizes the basic properties of Fourier transform that we will need.

\begin{prop}\label{prop:fourier} For any $f,h \colon G \to \complex$, we have the following
\begin{enumerate}
    \item {[Fourier transform of trivial representation]} \[\hat{f}(1) = \E_x[f(x)].\]
    \item {[Convolution]} \[\widehat{f*h}(\rho)=\hat{f}(\rho)\cdot \hat{h}(\rho).\]
    \item {[Fourier inversion formula]} \[f(x)= \sum_{ \rho \in \hat{G}}  d_{\rho} \cdot \langle\hat{f}(\rho), \rho(x)\rangle_{\HS}. \]
    \item {[Parseval’s identity]} \[ \|f \|^2_2= \sum_{\rho \in \hat{G}} d_{\rho} \cdot \|\hat{f}(\rho) \|^2_{\HS} .\]
    \item {[Fourier transfrom of class functions]} For any class function $f \colon G \to \complex$, the Fourier transform satisfies \[\hat{f}(\rho) = c \cdot I_{d_\rho}\] for some constant $c = c(f,\rho)\in \complex$. In other words, the Fourier transform is a scaling of the Identity operator $I_{d_\rho}$.
\end{enumerate}
\end{prop}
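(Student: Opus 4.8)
The plan is to prove the five items in increasing order of depth: (1) and (2) are one-line computations, (5) is an immediate consequence of Schur's lemma, and (3) and (4) both rest on a single nontrivial input, the Schur orthogonality relations (equivalently, the finite Peter--Weyl theorem).

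For (1) I would just substitute $\rho = 1$ into the definition, so that $\hat f(1) = \E_x[f(x)\cdot 1] = \E_x[f(x)]$. For (2) I would unfold the convolution and use left-invariance of the uniform measure on $G$: writing $z = xy^{-1}$ (for each fixed $y$, the variable $x$ is uniform iff $z$ is, and the change of variables makes $z$ and $y$ jointly uniform),
\[ \widehat{f*h}(\rho) = \E_x\Big[\E_y\big[f(xy^{-1})h(y)\big]\,\rho(x)\Big] = \E_{z,y}\big[f(z)\,h(y)\,\rho(z)\rho(y)\big] = \hat f(\rho)\,\hat h(\rho), \]
where the middle equality also uses that $\rho$ is a homomorphism, and the last step factors the product expectation. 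For (5), given a class function $f$, for any $g\in G$ I would conjugate $\hat f(\rho)$ and re-index by $y = gxg^{-1}$:
\[ \rho(g)\,\hat f(\rho)\,\rho(g)^{-1} = \E_x\big[f(x)\,\rho(gxg^{-1})\big] = \E_y\big[f(g^{-1}yg)\,\rho(y)\big] = \E_y\big[f(y)\,\rho(y)\big] = \hat f(\rho), \]
using in the last equality that $f$ is constant on conjugacy classes. Hence $\hat f(\rho)$ commutes with $\rho(g)$ for all $g$, so it is a self-intertwiner of the irreducible $\rho$, and Schur's lemma forces $\hat f(\rho) = c\cdot I_{d_\rho}$.

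For (3) and (4) I would invoke the finite Peter--Weyl theorem: the rescaled matrix coefficients $\big\{\sqrt{d_\rho}\,\rho(\cdot)_{ij} : \rho\in\hat G,\ 1\le i,j\le d_\rho\big\}$ form an orthonormal basis of $L^2(G)$ with respect to $\langle f,h\rangle = \E_x[f(x)\overline{h(x)}]$. Orthonormality of this family is precisely the Schur orthogonality relations; the standard derivation averages an arbitrary linear map $M\colon V_\sigma\to V_\rho$ to the intertwiner $\E_x[\rho(x)\,M\,\sigma(x)^{-1}]$, which by Schur's lemma vanishes when $\rho\not\cong\sigma$ and is a scalar — identified by taking traces — when $\rho=\sigma$, and then specializing $M$ to the matrix units reads off the relations. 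That the family spans, hence is a basis, then follows from the dimension identity $\sum_{\rho\in\hat G}d_\rho^2 = |G| = \dim L^2(G)$ of \cref{prop:rep}. Expanding $f$ in this orthonormal basis and collecting the coefficients attached to a fixed $\rho$ into the matrix $\hat f(\rho)$ (whose $(i,j)$ entry is $\E_x[f(x)\rho(x)_{ij}]$) rearranges the expansion into the trace form asserted in the inversion formula (3), while applying the Pythagorean identity for an orthonormal basis to the same expansion yields Parseval's identity (4), with the weights $d_\rho$ coming from the $\sqrt{d_\rho}$ normalization.

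The only step that is not pure bookkeeping is the Schur orthogonality relations / Peter--Weyl input used for (3) and (4); this is entirely standard, and I would simply cite \cite[Chapter 2]{Diaconis1998} rather than reprove it. The one place demanding care is matching conventions — the placement of complex conjugates and inverses — when translating the matrix-coefficient expansion into the exact trace form of (3) and (4), but this is a routine normalization check.
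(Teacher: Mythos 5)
The paper itself gives no proof of this proposition --- it is stated as standard background with a pointer to Diaconis~\cite[Chapter~2]{Diaconis1998} --- so there is no in-paper argument to compare against; your outline is the standard derivation and is essentially correct: items (1), (2), (5) are exactly the usual one-line computations (with (5) via Schur's lemma applied to the self-intertwiner $\hat f(\rho)$), and (3), (4) follow from Schur orthogonality/Peter--Weyl plus the dimension count $\sum_\rho d_\rho^2=|G|$, just as you say. One substantive remark: the convention check you flag at the end is genuinely needed, not just cosmetic. With the paper's Hilbert--Schmidt inner product $\langle A,B\rangle_{\HS}=\operatorname{Trace}(A^*B)$, which is conjugate-linear in the \emph{first} slot, the inversion formula as literally displayed evaluates to $\overline{f(x)}$ rather than $f(x)$; the correct statement has the arguments swapped, i.e. $f(x)=\sum_{\rho\in\hat G} d_\rho\,\langle\rho(x),\hat f(\rho)\rangle_{\HS}=\sum_{\rho\in\hat G} d_\rho\operatorname{Trace}\big(\hat f(\rho)\,\rho(x)^*\big)$, which is the form that drops out of your matrix-coefficient expansion (Parseval in (4) is unaffected, since reindexing $\rho\mapsto\overline{\rho}$ over $\hat G$ reconciles the conjugates). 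So when you carry out the ``routine normalization check,'' this is precisely the discrepancy it should catch.
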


The following claim (also used by Peluse \cite{Peluse2018}) observes that the scaled density function $\mu_{gC(g)}$ has a very simple Fourier transform since it is a translate of the class function $\mu_{C(g)}$
\begin{clm}
\label{fact:fc_mu}
 For any $g\in G$ and  $\rho\in \hat{G}$ we have: \[\hat{\mu}_{gC(g)}(\rho)= \frac{\chi_{\rho}(g)}{d_{\rho}} \cdot \rho(g)\] where $C(g)$ refers to the conjugacy class of $g$. Moreover, $\|\hat{\mu}_{gC(g)}  \|^2_{\HS}=\frac{|\chi_{\rho}(g)|^2}{d_{\rho}}$
\end{clm}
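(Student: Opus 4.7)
The plan is to compute the Fourier transform directly from the definition and then exploit two facts: (i) $\mu_{gC(g)}$ is the left translate by $g$ of the class function $\mu_{C(g)}$, and (ii) the Fourier transform of a class function is a scalar multiple of the identity (the last item of \cref{prop:fourier}). Unwinding the definition,
\[
\hat{\mu}_{gC(g)}(\rho) = \E_x[\mu_{gC(g)}(x)\,\rho(x)] = \frac{1}{|C(g)|}\sum_{h\in C(g)} \rho(gh) = \rho(g)\cdot \Big(\tfrac{1}{|C(g)|}\sum_{h\in C(g)} \rho(h)\Big) = \rho(g)\cdot \hat{\mu}_{C(g)}(\rho),
\]
so the problem reduces to evaluating $\hat{\mu}_{C(g)}(\rho)$.

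Since $\mu_{C(g)}$ is a class function, \cref{prop:fourier}(5) gives $\hat{\mu}_{C(g)}(\rho) = c\cdot I_{d_\rho}$ for some scalar $c \in \complex$. I would pin down $c$ by taking traces on both sides: on the right we get $c\cdot d_\rho$, while on the left we get $\tfrac{1}{|C(g)|}\sum_{h\in C(g)} \chi_\rho(h) = \chi_\rho(g)$ using that $\chi_\rho$ is constant on conjugacy classes. Hence $c = \chi_\rho(g)/d_\rho$ and therefore
\[
\hat{\mu}_{gC(g)}(\rho) = \rho(g)\cdot \frac{\chi_\rho(g)}{d_\rho} I_{d_\rho} = \frac{\chi_\rho(g)}{d_\rho}\,\rho(g),
\]
as desired.

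For the Hilbert--Schmidt norm, I would just plug into the definition. Since $\rho$ may be assumed unitary, $\rho(g)^*\rho(g) = I_{d_\rho}$, so
\[
\|\hat{\mu}_{gC(g)}(\rho)\|_{\HS}^2 = \frac{|\chi_\rho(g)|^2}{d_\rho^2}\,\operatorname{Trace}(\rho(g)^*\rho(g)) = \frac{|\chi_\rho(g)|^2}{d_\rho^2}\cdot d_\rho = \frac{|\chi_\rho(g)|^2}{d_\rho}.
\]

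There is no real obstacle here: the only substantive step is identifying the scalar via the trace trick, which is a standard move. Everything else is bookkeeping with the definitions of $\mu$, $\hat{\cdot}$, $\chi_\rho$, and $\|\cdot\|_{\HS}$.
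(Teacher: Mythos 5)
Your proof is correct and follows essentially the same route as the paper: factor out $\rho(g)$ via the translation $\mu_{gC(g)}(x)=\mu_{C(g)}(g^{-1}x)$, apply the class-function fact to get $\hat{\mu}_{C(g)}(\rho)=c\,I_{d_\rho}$, determine $c$ by taking traces, and use unitarity for the Hilbert--Schmidt norm. The only cosmetic difference is that you write the translation step as an explicit sum over $C(g)$ while the paper does a change of variables inside the expectation.
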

\begin{proof} We begin by observing that 
\begin{align*}
\hat{\mu}_{gC(g)}(\rho)&=\E_{x}\left[\mu_{gC(g)}(x)\cdot \rho(x)\right]
\\&= \E_{x}\left[\mu_{gC(g)}(gx)\cdot \rho(gx)\right] \\
&= \E_{x}\left[\mu_{gC(g)}(gx)\cdot \rho(g) \cdot \rho(x)\right]
\\&= \rho(g)\cdot \E_{x}\left[\mu_{C(g)}(x)\cdot \rho(x)\right]\\
& = \rho(g)\cdot \hat{\mu}_{C(g)}(\rho).
\end{align*}
On the other hand, as $\mu_{C(g)}$ is a class function, we have  $\hat{\mu}_{C(g)}(\rho)=c \cdot I_{d_{\rho}} $ for some constant $c\in \complex$. The constant $c$ can be determined by taking trace on either side of $c\dot I_{d_\rho} = \hat{\mu}_{C(g)} = \E_x[ \mu_{C(g)} (x) \cdot \rho(x)]$ and noting that $\operatorname{Trace}(\rho(x))=\chi_{\rho}(g)$ as follows:
\begin{align*}
    c \cdot d_\rho = \E_x\left[\mu_{C(g)}(x) \cdot \chi_\rho(x) \right] = \E_x\left[\mu_{C(g)}(x) \right] \cdot \chi_\rho(g) = \chi_\rho(g).
\end{align*}  
Hence, $c=\frac{\chi_{\rho}(g)}{d_{\rho}}$ and $\hat{\mu}_{gC(g)}=\frac{\chi_{\rho}(g)}{d_{\rho}} \cdot \rho(g)$. Lastly we have,
\begin{align*}
 \|\hat{\mu}_{gC(g)}\|^2_{\HS} &=   \left\| \frac{\chi_{\rho}(g)}{d_{\rho}}\cdot \rho(g) \right\|^2_{\HS}
 \\&=\frac{|\chi_{\rho}(g)|^2}{d^2_{\rho}}\cdot \operatorname{Trace}\left(\rho(g)^*\cdot \rho(g)\right)\\
 &=\frac{|\chi_{\rho}(g)|^2}{d^2_{\rho}}\cdot d_{\rho} \tag*{(\textrm{By unitariness of $\rho(g)$})}
 \\&=\frac{|\chi_{\rho}(g)|^2}{d_{\rho}}.\tag*\qedhere
\end{align*}
\end{proof}
The key property of $D$-quasirandom groups that we will be using is the following inequality due to Babai, Nikolov and Pyber, the proof of which we provide for the sake of completeness.
\begin{lemma}[\cite{BabaiNP2008}]\label{fact:BNP}
If $G$ is a $D$-quasirandom group and $f_1,f_2\colon G\to \complex$ such that either $f_1$ or $f_2$ is mean zero  then 
\[ \|f_1*f_2 \|_2 \leq \frac{1}{\sqrt{D}} \cdot \|f_1\|_2\cdot \|f_2 \|_2. \]
\end{lemma}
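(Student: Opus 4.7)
The natural approach is to pass to the Fourier side and use Parseval, the convolution identity, submultiplicativity of the Hilbert--Schmidt norm, and the quasirandomness hypothesis in exactly the spot where mean-zero is used.

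Concretely, I would start from Parseval (\Cref{prop:fourier}(4)) applied to $f_1 * f_2$ and the convolution identity (\Cref{prop:fourier}(2)):
\[
\|f_1 * f_2\|_2^2 \;=\; \sum_{\rho \in \hat{G}} d_\rho \,\|\widehat{f_1 * f_2}(\rho)\|_{\HS}^2 \;=\; \sum_{\rho \in \hat{G}} d_\rho \,\|\hat{f_1}(\rho)\,\hat{f_2}(\rho)\|_{\HS}^2.
\]
By submultiplicativity of the Hilbert--Schmidt norm, each summand is at most $d_\rho\,\|\hat{f_1}(\rho)\|_{\HS}^2\,\|\hat{f_2}(\rho)\|_{\HS}^2$. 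Without loss of generality assume that $f_1$ has mean zero, so by \Cref{prop:fourier}(1) we have $\hat{f_1}(1) = 0$ and the $\rho = 1$ term drops out; all surviving $\rho$ are nontrivial.

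Now comes the one place where quasirandomness enters. For any $\rho \neq 1$, Parseval applied to $f_2$ gives the pointwise bound $d_\rho \|\hat{f_2}(\rho)\|_{\HS}^2 \le \|f_2\|_2^2$, and since $d_\rho \ge D$ we obtain $\|\hat{f_2}(\rho)\|_{\HS}^2 \le \|f_2\|_2^2 / D$. Substituting back,
\[
\|f_1 * f_2\|_2^2 \;\le\; \frac{\|f_2\|_2^2}{D} \sum_{\rho \neq 1} d_\rho \,\|\hat{f_1}(\rho)\|_{\HS}^2 \;\le\; \frac{\|f_1\|_2^2\,\|f_2\|_2^2}{D},
\]
where the last inequality is Parseval for $f_1$. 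Taking square roots yields the claimed bound.

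There is no real obstacle here; the proof is three lines on the Fourier side. The only ``choice'' is which of the two factors $\|\hat{f_1}(\rho)\|_{\HS}$ or $\|\hat{f_2}(\rho)\|_{\HS}$ to estimate pointwise via $\|\hat{f_i}(\rho)\|_{\HS}^2 \le \|f_i\|_2^2/d_\rho$, and it does not matter which one we pick as long as the remaining factor can be summed by Parseval and we have used the mean-zero hypothesis to restrict the sum to $\rho \neq 1$ so that $d_\rho \ge D$ is available.
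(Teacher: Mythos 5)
Your proof is correct and follows essentially the same route as the paper: Parseval's identity, the convolution identity, Hilbert--Schmidt submultiplicativity, the mean-zero hypothesis to kill the $\rho = 1$ term, quasirandomness to gain a factor $1/D$, and Parseval again. The only cosmetic difference is that you estimate $\|\hat{f_2}(\rho)\|_{\HS}^2 \le \|f_2\|_2^2/D$ pointwise for $\rho \neq 1$, whereas the paper writes $d_\rho \le d_\rho^2/D$ and then bounds $\sum_{\rho\neq 1} d_\rho^2 \|\hat{f_1}(\rho)\|_{\HS}^2\|\hat{f_2}(\rho)\|_{\HS}^2$ by the product of two Parseval sums --- the same content, rearranged.
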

\begin{proof}
\begin{align*}
    \|f_1 * f_2 \|^2 
    & = \sum_{\rho \in \hat{G}} d_\rho \|\widehat{f_1 * f_2}(\rho)\|^2_{\HS}\\ 
    & = \sum_{\rho \in \hat{G}} d_\rho \|\hat{f_1}(\rho)\cdot \hat{f_2}(\rho)\|^2_{\HS}\\
    & \leq \sum_{\rho \in \hat{G}} d_\rho\|\hat{f_1}(\rho)\|^2_{\HS} \cdot \|\hat{f_2}(\rho)\|^2_{\HS} \tag*{(By submultiplicativity of norm)}\\
    &= \sum_{1 \neq \rho \in \hat{G}} d_\rho \|\hat{f_1}(\rho)\|^2_{\HS} \cdot \|\hat{f_2}(\rho)\|^2_{\HS} \tag*{(By mean zeroness)}\\
    & \leq \frac1D \cdot \sum_{1\neq \rho \in \hat{G}} d^2_\rho \|\hat{f_1}(\rho)\|^2_{\HS} \cdot \|\hat{f_2}(\rho)\|^2_{\HS} \tag*{(By $D$-quasirandomness)}\\
    & \leq \frac1D \left( \sum_{1\neq \rho \in \hat{G}} d_\rho \|\hat{f_1}(\rho)\|^2_{\HS}\right) \cdot \left( \sum_{1\neq \rho \in \hat{G}} d_\rho \|\hat{f_2}(\rho)\|^2_{\HS}\right)\\
    & \leq \frac1D \cdot \|f_1\|_2^2 \cdot \|f_2\|_2^2\;. \tag*\qedhere
\end{align*}
\end{proof}

The following is a simple corrollary of \cref{fact:BNP}.
\begin{cor}
\label{fact:der}
If $G$ is  $D$-quasirandom; $f\colon G \to \complex$ has zero mean and $\|f \|_{\infty} \leq 1$ then \[\E_{b}\big[|\E_{x} \Delta_bf(x)  |\big] \leq \frac{1}{\sqrt{D}}.\]
\end{cor}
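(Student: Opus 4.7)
The plan is to rewrite the inner expectation $\E_x \Delta_b f(x) = \E_x f(x) f(xb)$ as a convolution evaluated at $b$, so that Lemma \ref{fact:BNP} can be applied directly. Define the auxiliary function $h \colon G \to \complex$ by $h(y) := f(y^{-1})$. Then, using the substitution $z = y^{-1}$,
\[
(f * h)(b) = \E_y\bigl[f(by^{-1}) h(y)\bigr] = \E_y\bigl[f(by^{-1}) f(y^{-1})\bigr] = \E_z\bigl[f(z) f(zb)\bigr] = \E_x \Delta_b f(x).
\]

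Now I would apply Lemma \ref{fact:BNP}. Since $f$ has mean zero, so does $f$ viewed as the first factor, so the hypothesis of the lemma is satisfied. Observe that the change of variable $y \mapsto y^{-1}$ is a bijection on $G$, so $\|h\|_2 = \|f\|_2$, and by the assumption $\|f\|_\infty \leq 1$ we have $\|f\|_2 \leq 1$. Thus
\[
\|f * h\|_2 \;\leq\; \frac{1}{\sqrt{D}} \cdot \|f\|_2 \cdot \|h\|_2 \;\leq\; \frac{1}{\sqrt{D}}.
\]

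Finally, I would pass from the $2$-norm to the $1$-norm by Cauchy–Schwarz (equivalently, Jensen's inequality for the normalized counting measure):
\[
\E_b\bigl[|\E_x \Delta_b f(x)|\bigr] = \E_b\bigl[|(f*h)(b)|\bigr] = \|f*h\|_1 \leq \|f*h\|_2 \leq \frac{1}{\sqrt{D}},
\]
which is exactly the claimed bound. There is no real obstacle here: the only subtlety is recognizing that $\E_x f(x)f(xb)$ is a convolution in the conventions of the paper, which requires the minor cosmetic step of introducing $h(y) = f(y^{-1})$ so that the variable of integration in the definition of $f*h$ matches up correctly. Once this identification is made, the corollary is essentially a one-line consequence of Lemma \ref{fact:BNP}.
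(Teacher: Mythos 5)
Your approach is exactly the paper's: rewrite $\E_x\Delta_b f(x)$ as a convolution evaluated at $b$, bound its $\ell_2$-norm via \cref{fact:BNP}, and pass from $\ell_2$ to $\ell_1$ by Cauchy--Schwarz. However, there is a computational slip in the convolution identity. With $h(y) := f(y^{-1})$ and the paper's convention $(f*h)(b) = \E_y[f(by^{-1})h(y)]$, the substitution $z = y^{-1}$ gives
\[
(f*h)(b) = \E_y\bigl[f(by^{-1})\,f(y^{-1})\bigr] = \E_z\bigl[f(bz)\,f(z)\bigr],
\]
which involves \emph{left} multiplication by $b$ and is \emph{not} $\E_x[f(x)f(xb)] = \E_x\Delta_b f(x)$ in a nonabelian group; your step $\E_z[f(bz)f(z)] = \E_z[f(z)f(zb)]$ silently commutes $b$ past $z$. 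The correct identity, used in the paper, has the factors in the other order: writing $f'(x) = f(x^{-1})$ (your $h$), one has $(f'*f)(b) = \E_y[f'(by^{-1})f(y)]$, and the substitution $y = xb$ gives $(f'*f)(b) = \E_x[f(x)f(xb)] = \E_x\Delta_b f(x)$. Since \cref{fact:BNP} is stated symmetrically (it suffices that either factor have mean zero), the rest of your argument goes through unchanged once you replace $f*h$ by $h*f$ throughout; with that one correction, your proof coincides with the paper's.
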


\begin{proof}
Let $f'(x) := f(x^{-1})$. We have, 
\begin{align*}
\E_{b}\big[|\E_{x} \Delta_bf(x)  |\big]&= \E_{b}\big[|\E_{x} f(x)f(xb)  |\big]
\\&= \E_{b}\Big[\big|\E_{x} f'(x^{-1})f(xb) \big |\Big] 
\\&= \E_{b}\Big[|f'*f(b)| \Big] 
\\& \leq \E_{b}\Big[|f'*f(b)|^2 \Big]^{1/2} \tag*{(\textrm{By Cauchy-Schwarz inequality})}
\\& = \|f'*f\|_2
\\& \leq \frac{1}{\sqrt{D}} \cdot \|f'\|_2 \cdot \|f\|_2 \tag*{(By \cref{fact:BNP})}
\\&\leq  \frac{1}{\sqrt{D}} .\tag*{(Since $\|f\|_2 \leq \|f\|_{\infty} \leq 1$).}
\end{align*}
\end{proof}

\section{Proof of {\cref{thm:main}}}

The following proposition is where we deviate from Peluse's proof~\cite{Peluse2018}. We give an elementary proof for \emph{every} quasirandom group while Peluse proved the same result for \emph{simple} finite groups using the result of Liebeck and Shalev~\cite{LiebeckS2004} to bound the Witten zeta function $\zeta_G$ for \emph{simple} finite groups. 
\begin{prop}
\label{lemma:main}
Let $G$ be a $D$-quasirandom group. Let $f\colon G \to \complex$ such that $\|f\|_{\infty} \leq 1$, $\E[f]=0$ and $f_{b}$ is the mean zero component of the function $\Delta_{b} f$ (i.e., $f_b(x) = \Delta_b f(x) - \E_x[\Delta_b f(x)]$).  Then 
\[\E_{g,b} \bigg[ \Big | \E_x \big[ \Delta_b f(x)  \cdot (f_{g^{-1}bg} * \mu_{g^{-1}C(g^{-1})})(x) \big]  \Big| \bigg]\leq \frac{1}{\sqrt{D}}\;. \]
\end{prop}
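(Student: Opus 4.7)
The plan is to apply Cauchy--Schwarz to reduce the statement to a Fourier-analytic bound on $\|h_{g,b}\|_2^2$, where $h_{g,b} := f_{g^{-1}bg} * \mu_{g^{-1}C(g^{-1})}$, and then exploit \cref{fact:fc_mu} together with the orthogonality of irreducible characters. First I would observe that since $f_{g^{-1}bg}$ has mean zero, so does the convolution $h_{g,b}$ (its Fourier transform at the trivial representation vanishes); hence subtracting the constant part of $\Delta_b f$ leaves the inner product unchanged:
\[ \E_x\!\left[\Delta_b f(x) \cdot h_{g,b}(x)\right] \;=\; \E_x\!\left[f_b(x) \cdot h_{g,b}(x)\right]. \]
Writing $T(g,b)$ for this quantity and applying Cauchy--Schwarz in $x$ with $\|f_b\|_2 \le \|\Delta_b f\|_\infty \le 1$ gives $|T(g,b)| \le \|h_{g,b}\|_2$; by Jensen, it then suffices to show $\E_{g,b}\|h_{g,b}\|_2^2 \le 1/D$.

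Next I would expand $\|h_{g,b}\|_2^2$ using Parseval and the convolution identity. By \cref{fact:fc_mu} the factor $\widehat{\mu}_{g^{-1}C(g^{-1})}(\rho) = \tfrac{\chi_\rho(g^{-1})}{d_\rho}\, \rho(g^{-1})$ is a scalar times a unitary operator, so right multiplication by it preserves the Hilbert--Schmidt norm. This yields
\[ \|h_{g,b}\|_2^2 \;=\; \sum_{\rho \ne 1} \frac{|\chi_\rho(g^{-1})|^2}{d_\rho}\, \|\hat f_{g^{-1}bg}(\rho)\|_\HS^2, \]
where the trivial representation drops out because $\hat f_{g^{-1}bg}(1)=0$.

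Now I would take expectations over $g$ and $b$. For fixed $g$, conjugation $b \mapsto g^{-1}bg$ is a bijection of $G$, so $\E_b \|\hat f_{g^{-1}bg}(\rho)\|_\HS^2 = \E_b \|\hat f_b(\rho)\|_\HS^2$, independent of $g$, and the character factor averages to $\E_g |\chi_\rho(g^{-1})|^2 = 1$ by \cref{prop:rep}(3). Combining,
\[ \E_{g,b}\|h_{g,b}\|_2^2 \;=\; \sum_{\rho \ne 1} \frac{1}{d_\rho}\, \E_b \|\hat f_b(\rho)\|_\HS^2 \;\le\; \frac{1}{D}\, \E_b \sum_{\rho} \|\hat f_b(\rho)\|_\HS^2 \;\le\; \frac{1}{D}\, \E_b \|f_b\|_2^2 \;\le\; \frac{1}{D}, \]
using $d_\rho \ge D$ for $\rho \ne 1$ and the trivial bound $d_\rho \ge 1$ when invoking Parseval. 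Taking square roots then yields the claimed $1/\sqrt{D}$ bound (and in fact slightly better).

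The main obstacle, to my mind, is isolating the clean Fourier decoupling that occurs after Cauchy--Schwarz: namely that $\|h_{g,b}\|^2_2$ splits into a character factor $|\chi_\rho(g^{-1})|^2/d_\rho$ whose $g$-average is $1/d_\rho$ by orthogonality, multiplied by a Fourier-coefficient factor $\|\hat f_{g^{-1}bg}(\rho)\|^2_\HS$ whose $b$-average is invariant under conjugation by $g$. Once this decoupling is recognised, the proof collapses to a one-line application of quasirandomness together with Parseval.
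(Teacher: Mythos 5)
Your proof is correct and follows essentially the same route as the paper: Cauchy--Schwarz and Jensen to reduce to bounding $\E_{g,b}\|h_{g,b}\|_2^2$, then Parseval, \cref{fact:fc_mu}, orthogonality of characters, and quasirandomness. The one place where you differ (and improve) is in handling the Fourier coefficient of the convolution: the paper invokes submultiplicativity of the Hilbert--Schmidt norm, which loses a factor of $\|\rho(g^{-1})\|_{\HS}^2 = d_\rho$, whereas you observe that $\hat{\mu}_{g^{-1}C(g^{-1})}(\rho)$ is a scalar times a unitary, so right-multiplication preserves the HS norm \emph{exactly}. Carried through to the end this would actually give $\E_{g,b}\|h_{g,b}\|_2^2 \le \frac{1}{D^2}\,\E_b\|f_b\|_2^2 \le 1/D^2$, i.e.\ the sharper bound $1/D$ for the proposition (you stop at $1/D$ for the square, hence $1/\sqrt{D}$, by bounding $1/d_\rho \le 1/D$ and then using only $d_\rho \ge 1$ in Parseval, but you correctly note the slack).
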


\begin{proof} Let us denote the expression on the L.H.S. as $\Gamma$. We use simple manipulations and previously stated facts to simplify the expression.
\begin{align*}
  \Gamma^2 & \leq \E_{g,b}\bigg[\|\Delta_b f \|_2\cdot \| (f_{g^{-1}bg} * \mu_{g^{-1}C(g^{-1})} \|_2 \bigg]^2 \tag*{\text{(By Cauchy-Schwarz inequality)}}
     \\ & \leq 
     \E_{g,b}\bigg[  \|f_{g^{-1}bg} * \mu_{g^{-1}C(g^{-1})} \|_2 \bigg]^2 \tag*{\text{ (Since} $\|\Delta_b f \|_2 \leq 1$)}
     \\ & \leq
     \E_{g,b}\bigg[ \|f_{g^{-1}bg} * \mu_{g^{-1}C(g^{-1})} \|^2_2 \bigg] \tag*{\text{(By Cauchy Schwarz inequality)}}
     \\ & =
     \E_{g,b}\bigg[ \sum_{1\neq \rho \in \hat{G}} d_{\rho} \cdot \|\hat{f}_{g^{-1}bg}(\rho) \cdot \hat{\mu}_{g^{-1}C(g^{-1})}(\rho)\|^2_{\HS} \bigg]\tag*{\text{(By Parseval's identity \& $\hat{f}_{g^{-1}bg}(1)=0$ )}}
    \\ & \leq  \E_{g,b}\bigg[ \sum_{1\neq \rho \in \hat{G}} d_{\rho} \cdot \|\hat{f}_{gbg^{-1}}(\rho)\|^2_{\HS}\cdot  \| \hat{\mu}_{g^{-1}C(g^{-1})}(\rho) \|^2_{\HS} \bigg] \tag*{\text{(By submultiplicativity of norm)}}
    \\& =  \E_{g,b}\bigg[ \sum_{1\neq \rho \in \hat{G}}   \|\hat{f}_{g^{-1}bg}(\rho)\|^2_{\HS} \cdot |\chi_{\rho}(g)|^2 \bigg] \tag*{\text{(By \cref{fact:fc_mu})}}
    \\&
    =     \sum_{1\neq \rho \in \hat{G}} \E_{g}\Big[|\chi_{\rho}(g)|^2 \cdot \E_{b}\Big[\big \|  \hat{f}_{gbg^{-1}}(\rho)   \big \|^2_{\HS}\Big] \Big].
    \end{align*}
    
    Now using the fact that $gbg^{-1}$ is uniformly distributed in $G$ for a fixed $g$ and a uniformly random $b$ in $G$, we can simply the above expression as follows.
    
    \begin{align*}
  \Gamma^2 &
    \leq    \sum_{1\neq \rho \in \hat{G}}   \E_{g}\Big[|\chi_{\rho}(g)|^2 \cdot \E_{b}\Big[\big \|  \hat{f}_{b}(\rho)   \big \|^2_{\HS}\Big] \Big]
    \\&
    = \sum_{1\neq \rho \in \hat{G}}   \E_{b}\Big[\big \|  \hat{f}_{b}(\rho)   \big \|^2_{\HS}\Big] \cdot \E_{g}\Big[|\chi_{\rho}(g)|^2 \Big]\\
    &
    =     \sum_{1\neq \rho \in \hat{G}}   \E_{b}\Big[\big \|  \hat{f}_{b}(\rho)   \big \|^2_{\HS}\Big] \tag*{\text{(By orthogonality of $\chi_\rho$)}}
    \\&
    =    \E_{b}  \Big[\sum_{1\neq \rho \in \hat{G}}   \big \|  \hat{f}_{b}(\rho)   \big \|^2_{\HS}\Big].
    \end{align*}
    
    Finally, we use the fact that all the terms in the summation are non-negative and the group $G$ is a $D$-quasirandom group. 
    \begin{align*}
    \Gamma^2&   \leq  \frac{1}{D} \cdot \E_{b}  \Big[\sum_{1\neq \rho \in \hat{G}} d_{\rho} \cdot\big \|  \hat{f}_{b}(\rho)   \big \|^2_{\HS}\Big] 
    \\& = \frac{1}{D} \cdot \E_{b}  \Big[ \|f_b\|^2_{2} \Big] \tag*{\text{(By Parseval's identity)}}
    \\& \leq  \frac{1}{D}, \tag*{\text{(Because $\|f_b \|^2_2 \leq 1$).}}
    \end{align*}
    The proof of this lemma is similar to the proof of the BNP inequality (\cref{fact:BNP}). The key difference being that we have a complete characterization of the Fourier transform of $\mu_{gC(g)}$ from \cref{fact:fc_mu} which we use to give a sharper bound.
\end{proof}    

We are now ready to prove the main \cref{thm:main}.
This part of the proof is similar to the corresponding expression that appears in the paper of Peluse~\cite{Peluse2018}, which is in turn inspired by Tao's adaptation of Gowers' repeated Cauchy-Schwarzing trick to the nonebelian setting. We, however, present the entire proof for the sake of completeness.

\begin{proof}[Proof of {\cref{thm:main}}]
Let us denote the L.H.S. of the expression by $\Theta_{f_1, f_2, f_3}$. Without loss of generality we assume  $\E [f_3]=0$. Now we have, 
\begin{align*}
   \Theta^4_{f_1, f_2, f_3}&=
   \Big|\E_{x,y} \big[f_1(x)f_2(xy)f_3(xy^2) \big]\Big|^4
   \\&= \Big|\E_{x,z} \big[f_1(xz^{-1})f_2(x)f_3(xz) \big]  \Big |^4 \tag*{\text{ (Change of variables}: $x\leftarrow xy, z\leftarrow y$)}
   \\& \leq \Big|\E_{x,z_1,z_2} \big[f_1(xz_1^{-1})f_1(xz_2^{-1})f_3(xz_1) f_3(xz_2) \big]  \Big |^2 \tag*{(\textrm{Cauchy-Schwarz over $x$; $\|f_2\|_{\infty}=1$ and expansion} )}
   \\& =\Big|\E_{y,z,a} \big[f_1(y)f_1(ya)f_3(yz^2) f_3(yza^{-1}z) \big]  \Big |^2 \tag*{\text{ (Change of variables}: $y\leftarrow xz^{-1}_1, z \leftarrow z_1, a \leftarrow z_1z^{-1}_2$)}
   \\&= \Big|\E_{y,z,a} \big[\Delta_{a}f_1(y) \cdot \Delta_{z^{-1}a^{-1}z}~f_3(yz^2)  \big]  \Big |^2  
   \\& \leq \Big|\E_{y,a,z_1,z_2} \big[  \Delta_{z_1^{-1}a^{-1}z_1}~f_3(yz_1^2) \cdot \Delta_{z_2^{-1}a^{-1}z_2}~f_3(yz_2^2)  \big]  \Big |, 
   \tag*{(\textrm{Cauchy-Schwarz over $y,a$; $\|f_1\|_{\infty}\leq 1$} ).}
 \end{align*}
 Now, using the following change of variables, $z \leftarrow z_1,~ x\leftarrow yz^2_1,~  b\leftarrow z_1^{-1}a^{-1}z_1,~ g \leftarrow z_1^{-1}z_2 $ , we get
 \begin{align*}
  \Theta^4_{f_1, f_2, f_3}&\leq
   \Big|\E_{x,b,z,g} \big[  \Delta_{b}~f_3(x) \cdot \Delta_{g^{-1}bg}~f_3(xz^{-1}gzg)  \big]  \Big | \\
   &=  \bigg|\E_{x,b,g} \Big[  \Delta_{b}~f_3(x) \cdot \E_{z}[\Delta_{g^{-1}bg}~f_3(xz^{-1}gzg)]  \Big]  \bigg |
   \\&= \bigg|\E_{x,b,g} \Big[  \Delta_{b}~f_3(x) \cdot \E_{a}[\Delta_{g^{-1}bg}~f_3(xa^{-1})\cdot \frac{|G|}{|C(g^{-1})|} 1_{g^{-1}C(g^{-1})} (a) ]\Big]  \bigg | 
   \\&=\bigg|\E_{x,b,g} \Big[  \Delta_{b}~f_3(x) \cdot \E_{a}[\Delta_{g^{-1}bg}~f_3(xa^{-1})\cdot \mu_{g^{-1}C(g^{-1})} (a)] \Big]  \bigg |
   \\&= \bigg|\E_{x,b,g} \Big[  \Delta_{b}~f_3(x) \cdot \Delta_{g^{-1}bg}~f_3* \mu_{g^{-1}C(g^{-1})}(x) \Big]  \bigg |.
   \end{align*}
   We now separate the function $\Delta_{g^{-1}bg}~f_3$ from its the mean zero part as follows: Let $\Delta_{g^{-1}bg}~f_3 = f'_{g^{-1}bg} + f_{g^{-1}bg}$ where $f'_{g^{-1}bg}=\E_{x}[\Delta_{g^{-1}bg}~f_3(x)]$ and $f_{g^{-1}bg} (x) = \Delta_{g^{-1}bg}~f_3(x) - f'_{g^{-1}bg}$. 
   \begin{align*}
   \Theta^4_{f_1, f_2, f_3}& \leq \bigg|\E_{x,b,g} \Big[  \Delta_{b}~f_3(x) \cdot (f_{g^{-1}bg}+f'_{g^{-1}bg})* \mu_{g^{-1}C(g^{-1})}(x) \Big]  \bigg |
   \\& \leq  \E_{b,g} \bigg[\Big|\E_{x} \big[  \Delta_{b}~f_3(x) \cdot f_{g^{-1}bg}* \mu_{g^{-1}C(g^{-1})}(x) \big]  \Big | \bigg]\\
   &\quad\quad\quad\quad\quad\quad\quad\quad\quad +    \E_{b,g} \bigg[\Big|\E_{x} \big[  \Delta_{b}~f_3(x) \cdot f'_{g^{-1}bg}* \mu_{g^{-1}C(g^{-1})}(x) \big]  \Big | \bigg]
   \\& \leq \frac{1}{\sqrt{D}} +  \E_{b,g} \Big[\big|\E_{x} \big[  \Delta_{b}~f_3(x)  \big]  \big | \cdot \|f'_{g^{-1}bg}* \mu_{g^{-1}C(g^{-1})} \|_{\infty}\Big] \tag*{(\textrm{Using \cref{lemma:main} to bound the first expectation})}
  \\& = \frac{1}{\sqrt{D}} +  \E_{b,g} \Big[\big|\E_{x} \big[  \Delta_{b}~f_3(x)  \big]  \big | \cdot |f'_{g^{-1}bg}|\Big] 
    \\& \leq \frac{1}{\sqrt{D}} +  \E_{b} \Big[\big|\E_{x} \big[  \Delta_{b}~f_3(x)  \big]  \big | \Big] \tag*{(Using $|f'_{g^{-1}bg}| \leq 1$)}
   \\& \leq \frac{2}{\sqrt{D}}\;,\tag*{(\textrm{By \cref{fact:der} and $\|f_3 \|_{\infty} \leq 1$}).}
\end{align*}
\end{proof}

{\small 
  \bibliographystyle{prahladhurl}
  \bibliography{3apquasi-bib}
  }

\end{document}